\newcommand{\nc}{\newcommand}
\nc{\bC}{\bold{C}} \nc{\bN}{\Bbb{N}} \nc{\cF}{\mathcal{F}}
\nc{\cE}{\mathcal{E}} \nc{\cR}{\mathcal{R}} \nc{\cM}{\mathcal{M}}
\nc{\al}{\alpha} \nc{\bt}{\beta} \nc{\gm}{\gamma} \nc{\dl}{\delta}
\nc{\om}{\omega} \nc{\sg}{\sigma} \nc{\Sg}{\Sigma} \nc{\vf}{\varphi}
\nc{\ve}{\varepsilon} \nc{\os}{\overset} \nc{\ol}{\overline}
\nc{\ul}{\underline} \nc{\us}{\underset} \nc{\sbs}{\subset}
\nc{\bsl}{\backslash} \nc{\Ra}{\Rightarrow}
\nc{\lra}{\longrightarrow} \nc{\all}{\allowdisplaybreaks}
\nc{\Codes}{\operatorname{{\bold{Codes}}}}
\nc{\RegMono}{\operatorname{\mathcal{R}{\rm{eg}\mathcal{M}{\rm{ono}\!}}}}
\nc{\RegEpi}{\operatorname{\mathcal{R}{\rm{eg}\mathcal{E}{\rm{pi}\!}}}}
\nc{\Mn}{\operatorname{\mathcal{M}{\rm{ono}\!}}}
\nc{\Ep}{\operatorname{\mathcal{E}{\rm{pi}\!}}}
\nc{\Rg}{\operatorname{\mathcal{R}{\rm{eg}\!}}}
\nc{\Ob}{\operatorname{Ob\!}}
\numberwithin{equation}{section}
\newtheorem{theo}{\ \ \ Theorem}[section]
\newtheorem{lem}[theo]{\ \ \ Lemma}
\newtheorem{prop}[theo]{\ \ \ Proposition}
\newtheorem{cor}[theo]{\ \ \ Corollary}
\theoremstyle{definition}
\newtheorem{exmp}[theo]{\ \ \ Example}
\theoremstyle{remark}
\newtheorem{rem}[theo]{\ \ \ Remark}
\begin{document}

\title[]
{The criteria for the uniqueness of a weight homomorphism of a baric algebra}

\author{Dali Zangurashvili}

\maketitle

\begin{abstract}
The criteria for a baric algebra $A$ (over a field $K$) to have a unique weight homomorphism are found. One of them requires a certain system of equations to have a unique non-trivial solution in the field $K$. Applying this criterion, we provide an example showing that Holgate's well-known sufficient condition for the uniqueness of a weight homomorphism  is not necessary, and give also a new example of a baric algebra with two weight homomorphisms. Another criterion found in this paper asserts that a baric algebra has a unique weight homomorphism if and only if the transition matrix from any semi-natural basis $B_1$ to any semi-natural basis $B_2$ is stochastic.
\bigskip

\noindent{\bf Key words and phrases}:  baric algebra; weight homomorphism; character; semi-natural basis; finite-dimensional algebra; row/column stochastic matrix; transition matrix.

\noindent{\bf 2020  Mathematics Subject Classification}: 17D92, 17D99.
\end{abstract}

\section{Introduction}

The notion of a baric algebra was introduced by Etherington in 1939 for algebraic genetics purposes \cite{E}. It was defined as a finite-dimensional algebra $A$ over a field $K$ such that there exists a non-trivial algebra homomorphism $A\rightarrow K$ (called a weight homomorphism). In the same paper, Etherington noted that a baric algebra may have more than one weight homomorphism by providing the corresponding sufficient condition for the case of commutative associative algebras. In 1969, Holgate gave a sufficient condition for a baric algebra to have a unique weight homomorphism \cite{H}; it requires the algebra to have a weight homomorphism with the nil kernel.

In the present paper, some necessary and sufficient conditions for the uniqueness of a weight homomorphism of a baric algebra are found. One of them requires the following system of equations to have a unique non-trivial solution in the field $K$:
\begin{equation}
x_ix_j=\sum_{k=1}^{n}\gamma_{ijk}x_k \; \; \; \; (1\leq i,j\leq n)
\end{equation}
\noindent where $n$ is the dimension of the algebra, while $\gamma_{ijk}$ are the structural constants with respect to a basis. Applying this criterion, we provide an example showing that the above-mentioned sufficient condition for the uniqueness of a weight homomorphism by Holgate is not necessary. Moreover, we give an example of a baric algebra with two weight homomorphisms that does not fall under the scope of Etherington's above-mentioned result. 

Another criterion found in this paper  asserts that \textit{a baric algebra has a unique weight homomorphism if and only if the transition matrix from any semi-natural basis $B_1$ to any semi-natural basis $B_2$ is stochastic}.

\vskip+2mm
The author gratefully acknowledges the financial support from Shota
Rustaveli National Science Foundation of Georgia (Ref.: STEM-22-
1601).

\section{Weight homomorphisms and the Etherington's system of equations}
Let $K$ be a field. If it is not stated otherwise, all  algebras and matrices considered in the paper are assumed to be respectively algebras and matrices over $K$. At that, algebras are not assumed to be associative or commutative.


  Let $A$ be a finite-dimentional algebra, $e_1,e_2,...,e_n$ be its basis, and let $\gamma_{ijk}$ ($1\leq i,j,k\leq n$) be the structural constants with respect to this basis. In \cite{E}, Etherington considered the following system of equations:
 \begin{equation}
x_ix_j=\sum_{i=1}^{n}\gamma_{ijk}x_k \; \; \; \; (1\leq i,j\leq n)
\end{equation}
\vskip+2mm
\textbf{Throughout the paper, under a solution of system (2.1) we mean its solution in the field} $\mathbf{K}$.
\vskip+2mm
Let $W$ be the set of all weight homomorphisms of the algebra $A$. The following proposition immediately follows from the arguments given in the paper \cite{E} by Etherington.

\begin{prop} The set $W$ is bijective to the set of non-trivial solutions of system (2.1).
\end{prop}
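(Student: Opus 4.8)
The plan is to set up the correspondence explicitly and check it is well-defined in both directions. First I would fix the basis $e_1,\dots,e_n$ and recall that a weight homomorphism is a non-trivial algebra map $w\colon A\to K$; such a map is determined by the scalars $x_k=w(e_k)$, and conversely any tuple $(x_1,\dots,x_n)\in K^n$ defines a unique $K$-linear map $w\colon A\to K$ with $w(e_k)=x_k$. So the assignment $w\mapsto (w(e_1),\dots,w(e_n))$ is a bijection between $\operatorname{Hom}_K(A,K)$ (linear maps) and $K^n$. The content of the proposition is to identify, on the left, the multiplicative maps with non-trivial tuples and, on the right, the non-trivial solutions of system (2.1).

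The key computation is this: a linear map $w$ is an algebra homomorphism if and only if $w(e_ie_j)=w(e_i)w(e_j)$ for all $i,j$ (bilinearity extends multiplicativity from the basis to all of $A$). Writing $e_ie_j=\sum_k\gamma_{ijk}e_k$ and applying $w$, the left side becomes $\sum_k\gamma_{ijk}w(e_k)=\sum_k\gamma_{ijk}x_k$, and the right side is $x_ix_j$. Hence $w$ is multiplicative if and only if the tuple $(x_1,\dots,x_n)=(w(e_1),\dots,w(e_n))$ satisfies (2.1). It remains to match the non-triviality conditions: a linear map $w$ is the zero map exactly when all $x_k=0$, i.e. exactly when the corresponding tuple is the trivial solution; so $w$ is non-trivial (i.e. a weight homomorphism) if and only if its tuple is a non-trivial solution. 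Composing these observations, the restriction of the bijection $\operatorname{Hom}_K(A,K)\cong K^n$ to multiplicative non-trivial maps gives a bijection $W\to\{\text{non-trivial solutions of (2.1)}\}$.

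There is no real obstacle here; the argument is essentially a bookkeeping exercise, and indeed the statement is attributed to Etherington's original reasoning. The only point worth stating carefully is that extending multiplicativity from basis elements to arbitrary elements of $A$ uses bilinearity of the product together with $K$-linearity of $w$, so that checking the $n^2$ identities $w(e_ie_j)=w(e_i)w(e_j)$ suffices; I would spell this out in one line rather than take it for granted. I would also remark that the product on $A$ is not assumed associative or commutative, but this plays no role: only bilinearity is needed.
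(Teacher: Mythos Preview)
Your argument is correct and follows exactly the same route as the paper: the paper's proof is the single line ``send any weight homomorphism $w$ to the $n$-tuple $(w(e_1),\dots,w(e_n))$; this mapping is obviously a bijection,'' and what you have written simply unpacks that ``obviously'' by checking multiplicativity on basis products and matching the non-triviality conditions. There is nothing to correct; your version is a fully detailed elaboration of the paper's terse proof.
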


\begin{proof}
Send any weight homomorphism $w$ to the $n$-tuple $$(w(e_1),w(e_2),...,w(e_n)).$$ This mapping is obviously a bijection.
\end{proof}

Proposition 2.1 immediately implies the following observation by Etherington.

\begin{prop} (Etherington \cite{E})
An algebra $A$ is baric if and only if the system of equations (2.1) has a non-trivial solution.
\end{prop}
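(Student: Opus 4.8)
The plan is to obtain this statement as an immediate consequence of Proposition 2.1. By the very definition recalled in the Introduction, an algebra $A$ is baric if and only if it possesses at least one weight homomorphism, that is, if and only if the set $W$ is non-empty. Thus everything reduces to translating the condition $W\neq\varnothing$ into a condition on system (2.1).

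First I would appeal to Proposition 2.1, which exhibits an explicit bijection between $W$ and the set of non-trivial solutions of (2.1), namely $w\mapsto(w(e_1),\dots,w(e_n))$. Since a bijection carries a non-empty set to a non-empty set and vice versa, $W$ is non-empty precisely when the set of non-trivial solutions of (2.1) is non-empty. Chaining this with the previous paragraph yields: $A$ is baric $\Longleftrightarrow$ $W\neq\varnothing$ $\Longleftrightarrow$ (2.1) has a non-trivial solution.

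I do not anticipate any real obstacle here; the argument is essentially a one-line corollary of Proposition 2.1. The only point worth a brief remark is the compatibility of the two notions of ``non-triviality'': under the correspondence of Proposition 2.1, a homomorphism $w$ is non-trivial (not identically zero) exactly when the tuple $(w(e_1),\dots,w(e_n))$ is not the zero tuple, i.e.\ exactly when it is a non-trivial solution of (2.1). Once this matching is noted, the equivalence is immediate and the proof is complete.
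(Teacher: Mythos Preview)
Your proposal is correct and matches the paper's approach exactly: the paper simply states that Proposition 2.2 ``immediately implies'' from Proposition 2.1, and your argument spells out precisely this immediate implication via the bijection $w\mapsto(w(e_1),\dots,w(e_n))$.
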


We call system (2.1) of equations the \textit{Etherington's system} of an algebra $A$ with respect to the basis $e_1, e_2,..., e_n$, and denote the set of its non-trivial solutions by $Eth(e_1,e_2,...,e_n)$.

\begin{cor}
For any bases $e_1,e_2,..., e_n$ and $e'_1,e'_2,..., e'_n$ of $A$, the sets $Eth(e_1,e_2,...,e_n)$ and $Eth(e'_1,e'_2,...,e'_n)$ are bijective.
\end{cor}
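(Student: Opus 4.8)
The plan is to exploit the fact that the set $W$ of weight homomorphisms is an intrinsic invariant of the algebra $A$, whereas the set $Eth(e_1,e_2,\dots,e_n)$ depends on a choice of basis only through the bijection furnished by Proposition 2.1. Concretely, I would apply Proposition 2.1 twice: once to the basis $e_1,e_2,\dots,e_n$, which gives a bijection $\Phi\colon W\to Eth(e_1,e_2,\dots,e_n)$ sending $w$ to $(w(e_1),w(e_2),\dots,w(e_n))$, and once to the basis $e'_1,e'_2,\dots,e'_n$, which gives a bijection $\Phi'\colon W\to Eth(e'_1,e'_2,\dots,e'_n)$ sending $w$ to $(w(e'_1),w(e'_2),\dots,w(e'_n))$. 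The composite $\Phi'\circ\Phi^{-1}$ is then the desired bijection between $Eth(e_1,e_2,\dots,e_n)$ and $Eth(e'_1,e'_2,\dots,e'_n)$.

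If one wants the bijection exhibited explicitly rather than abstractly, I would write $e'_j=\sum_{i=1}^n p_{ij}e_i$ for the transition matrix $P=(p_{ij})$, and observe that for any weight homomorphism $w$ one has $w(e'_j)=\sum_{i=1}^n p_{ij}w(e_i)$, so that $\Phi'\circ\Phi^{-1}$ is precisely the linear map sending $(x_1,\dots,x_n)$ to $\big(\sum_{i}p_{i1}x_i,\dots,\sum_{i}p_{in}x_i\big)$, i.e. multiplication by $P^{\mathsf T}$; its inverse is multiplication by $(P^{-1})^{\mathsf T}$. One could, alternatively, bypass Proposition 2.1 entirely and verify by a direct computation that $P^{\mathsf T}$ carries non-trivial solutions of the Etherington system in the first basis to non-trivial solutions in the second basis, using the transformation rule for structural constants under change of basis; but this is more laborious and less transparent than the two-step argument above.

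I do not expect a genuine obstacle here: the statement is essentially a formal consequence of Proposition 2.1 together with the basis-independence of $W$. The only point deserving a word of care is that one should be sure the map $\Phi'\circ\Phi^{-1}$ indeed lands in $Eth(e'_1,\dots,e'_n)$ and restricts to a bijection there — in particular that it sends non-trivial solutions to non-trivial solutions — but this is automatic since both $\Phi$ and $\Phi'$ are bijections onto the respective sets of non-trivial solutions. Hence the proof is short: invoke Proposition 2.1 for each basis and compose.
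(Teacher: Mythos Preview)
Your proposal is correct and matches the paper's approach: the paper states the corollary immediately after Proposition~2.1 with no separate proof, treating it as a direct consequence of the fact that both $Eth(e_1,\dots,e_n)$ and $Eth(e'_1,\dots,e'_n)$ are in bijection with the basis-independent set $W$. Your additional remarks on the explicit form of the bijection via the transition matrix go beyond what the paper records, but are consistent with it.
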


\begin{theo}(Schafer \cite{S})
An $n$-dimensional algebra $A$ is baric if and only if it has a basis $e_1,e_2,..., e_n$ such that, for its structural constants $\gamma_{ijk}$, the equalities
\begin{equation}
\sum_{k=1}^{n}\gamma_{ijk}=1 \; \; \; (1\leq i,j\leq n)
\end{equation}
hold ($1$ is the unit of the field $K$).
\end{theo}

A basis of $A$ that satisfies equalities (2.2) is called \textit{semi-natural} \cite{Z}.

\begin{lem}
A basis $e_1,e_2,..., e_n$ of an algebra $A$ is semi-natural if and only if $$(1,1,...,1)\in Eth(e_1,e_2,..., e_n).$$
\end{lem}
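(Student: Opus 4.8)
The plan is to simply substitute the all-ones tuple into the Etherington system and observe that the resulting conditions are exactly the defining equalities (2.2) of a semi-natural basis.

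First I would unpack what the membership $(1,1,\ldots,1)\in Eth(e_1,e_2,\ldots,e_n)$ asserts, namely two things: that $(1,1,\ldots,1)$ is a non-trivial $n$-tuple, and that it satisfies every equation of system (2.1) after the substitution $x_1=x_2=\cdots=x_n=1$. The first of these is automatic, since $1\neq 0$ in the field $K$, so the all-ones tuple is never the zero tuple; hence the membership reduces to the second condition alone.

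Next I would evaluate the $(i,j)$-th equation of (2.1) at $x_1=x_2=\cdots=x_n=1$: the left-hand side $x_ix_j$ becomes $1\cdot 1=1$, while the right-hand side $\sum_{k=1}^{n}\gamma_{ijk}x_k$ becomes $\sum_{k=1}^{n}\gamma_{ijk}$. Thus the all-ones tuple satisfies system (2.1) if and only if $\sum_{k=1}^{n}\gamma_{ijk}=1$ for all $1\leq i,j\leq n$, which is precisely equality (2.2), i.e. the condition that the basis $e_1,e_2,\ldots,e_n$ be semi-natural. Combining this with the previous paragraph yields the claimed equivalence in both directions simultaneously.

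There is essentially no obstacle here; the only point worth flagging is the step verifying that the all-ones tuple is automatically non-trivial, which relies on $K$ being a field (so that $1\neq 0$) — without this one could not pass from a basis satisfying (2.2) to an actual element of $Eth(e_1,e_2,\ldots,e_n)$, as opposed to merely a solution of the equations.
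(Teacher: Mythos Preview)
Your proof is correct and is exactly the direct verification one would expect; the paper itself states the lemma without proof, treating it as immediate, so your argument spells out precisely what the paper leaves implicit.
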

\vskip+3mm

 Proposition 2.1 immediately implies
\begin{theo}
A finite-dimensional algebra has a unique weight homomorphism if and only if the Etherington's system has a unique non-trivial solution.
\end{theo}

Theorem 2.6 enables us to construct an example of a baric algebra that has a unique weight homomorphism, but does not satisfy Holgate's sufficient condition mentioned in the Introduction.

\begin{exmp}
Let $A$ be a 2-dimensional algebra over the field $\mathbb{R}$ of real numbers with a basis $e_1, e_2$ and  structural constants $\gamma_{ijk}$ ($1\leq i,j,k \leq 2$). Let $$e_1^2=e_1, \;  e_2^2=e_2,$$
and the following inequalities be satisfied:
$$\gamma_{121}\neq \gamma_{211}, \; \gamma_{122}\neq \gamma_{212};$$
$$\gamma_{121}+\gamma_{122}\neq \gamma_{211}+\gamma_{212}.$$
These inequalities ensure that the corresponding Etherington's system has only the zero solution.

Consider the Cartesian product $A\times \mathbb{R}$ and its standard basis  \begin{equation}
\label{eqn:(2.4)}
(e_1,0), (e_2,0),(0, 1).
\end{equation} 
\noindent One obviously has \begin{equation}
\label{eqn:(2.8)}
(e_i,0)\cdot (e_j,0)= \gamma_{ij1}(e_1,0)+\gamma_{ij2}(e_2,0) \; \; (1\leq i,j\leq 2)
\end{equation}
This implies that all equations (with the unknowns $x_1,x_2$) from the Etherington's system (of the algebra $A$) with respect to the basis $e_1, e_2$ are involved in the algebra $A\times \mathbb{R}$'s Etherington's system (with the unknowns $x_1,x_2, x_{3}$) with respect to the basis (2.3). Applying this fact, it is easy to notice that the algebra $A\times \mathbb{R}$'s Etherington's system has precisely one non-trivial solution $(0,0,1)$.
Theorem 2.6 implies that the projection $\pi_2:A\times \mathbb{R}\rightarrow \mathbb{R}$ is a unique weight homomorphism of $A\times \mathbb{R}$. Its kernel obviously is not nil. 
\end{exmp}

\vskip+2mm
The following lemma is obvious.

\begin{lem}
Let  $e_1,e_2,...,e_n$ be a basis of an algebra $A$. For any element $\alpha\neq 0$ of $K$, the following conditions are equivalent:
\vskip+2mm
(i) the $n$-tuple $(\alpha,\alpha,...,\alpha)$ is a solution of the Etherington's system;

(ii) for any $i,j$ ($1\leq i,j,\leq n$), one has the equality

$$\sum_{k=1}^{n}\gamma_{ijk}=\alpha.$$
\end{lem}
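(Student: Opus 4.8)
The plan is to substitute the constant $n$-tuple directly into the defining equations of the Etherington's system and read off the stated equivalence. Recall that the Etherington's system of $A$ with respect to the basis $e_1,e_2,\dots,e_n$ consists of the equations $x_ix_j=\sum_{k=1}^{n}\gamma_{ijk}x_k$ for $1\leq i,j\leq n$.

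First I would set $x_1=x_2=\dots=x_n=\alpha$. Then the left-hand side of the $(i,j)$-th equation becomes $\alpha\cdot\alpha=\alpha^{2}$, while the right-hand side becomes $\sum_{k=1}^{n}\gamma_{ijk}\alpha=\alpha\sum_{k=1}^{n}\gamma_{ijk}$. Hence the $n$-tuple $(\alpha,\alpha,\dots,\alpha)$ satisfies the $(i,j)$-th equation if and only if $\alpha^{2}=\alpha\sum_{k=1}^{n}\gamma_{ijk}$.

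Next, since $\alpha\neq 0$ and $K$ is a field, $\alpha$ is invertible; multiplying the latter equality by $\alpha^{-1}$ shows that it is equivalent to $\alpha=\sum_{k=1}^{n}\gamma_{ijk}$. Letting $(i,j)$ range over all pairs with $1\leq i,j\leq n$ then yields that $(\alpha,\alpha,\dots,\alpha)\in Eth(e_1,e_2,\dots,e_n)$ precisely when $\sum_{k=1}^{n}\gamma_{ijk}=\alpha$ for all such $i,j$, which is exactly the asserted equivalence of (i) and (ii). This is, in effect, the computation already used for Lemma 2.5, carried out with an arbitrary nonzero scalar in place of $1$.

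I do not expect any genuine obstacle here — the statement is, as the paper notes, obvious. The only point requiring a word of care is the use of the hypothesis $\alpha\neq 0$: without it one cannot cancel the common factor $\alpha$, and indeed for $\alpha=0$ the equation $\alpha^{2}=\alpha\sum_{k}\gamma_{ijk}$ holds regardless of the structural constants, so condition (ii) would no longer be equivalent to (i).
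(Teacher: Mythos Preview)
Your argument is correct and is precisely the obvious direct verification the paper has in mind; the paper gives no proof beyond declaring the lemma obvious, and your substitution-and-cancellation is the natural way to unpack that. Your remark on why the hypothesis $\alpha\neq 0$ is needed is apt and could even be added as a clarifying sentence.
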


Lemma 2.8 implies  
\begin{lem}
Let an algebra $A$ has a basis $e_1,e_2,...,e_n$. If, for any solution $(\alpha_1,\alpha_2,...,\alpha_n)$ of the Etherington's system, one has 
$$\alpha_1=\alpha_2=...=\alpha_n,$$
then $A$ has not more than one weight homomorphism.
\end{lem}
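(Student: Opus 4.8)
The plan is to reduce everything to counting non-trivial solutions of the Etherington's system and to use Lemma 2.8 to pin down their common coordinate value. Suppose $(\alpha_1,\alpha_2,\dots,\alpha_n)$ is any non-trivial solution of the Etherington's system of $A$ with respect to $e_1,e_2,\dots,e_n$. By the hypothesis of the lemma we have $\alpha_1=\alpha_2=\cdots=\alpha_n$; write $\alpha$ for this common value. Since the solution is non-trivial, $\alpha\neq 0$, so the $n$-tuple $(\alpha,\alpha,\dots,\alpha)$ is a solution with $\alpha\in K\setminus\{0\}$, and Lemma 2.8 (implication (i)$\Rightarrow$(ii)) applies.

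By Lemma 2.8(ii) we obtain $\sum_{k=1}^{n}\gamma_{ijk}=\alpha$ for all $i,j$ with $1\leq i,j\leq n$; in particular $\alpha=\sum_{k=1}^{n}\gamma_{11k}$. The right-hand side depends only on the algebra $A$ and the fixed basis $e_1,e_2,\dots,e_n$, not on the chosen solution. Hence if $(\beta_1,\dots,\beta_n)$ is another non-trivial solution, the same reasoning gives $\beta_1=\cdots=\beta_n=\sum_{k=1}^{n}\gamma_{11k}=\alpha$, so the two solutions coincide. Therefore the Etherington's system has at most one non-trivial solution.

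Finally, Theorem 2.6 (equivalently Proposition 2.1) says that $A$ has a unique weight homomorphism precisely when the Etherington's system has a unique non-trivial solution; more generally the weight homomorphisms are in bijection with the non-trivial solutions. Since we have just shown there is at most one such solution, $A$ has at most one weight homomorphism, as claimed. There is no real obstacle here: the only point requiring a moment's care is that one must invoke the non-triviality hypothesis (that $\alpha\neq 0$) before applying Lemma 2.8, since that lemma is stated only for $\alpha\neq 0$; otherwise the proof is a direct chain of the preceding results.
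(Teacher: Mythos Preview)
Your argument is correct and is exactly the one the paper has in mind: the paper's entire proof is the phrase ``Lemma 2.8 implies,'' and you have simply spelled out that implication---any non-trivial solution is $(\alpha,\dots,\alpha)$ with $\alpha\neq 0$, Lemma 2.8 forces $\alpha=\sum_k\gamma_{11k}$, hence the solution is unique, and Proposition 2.1 finishes. Your remark that non-triviality is needed before invoking Lemma 2.8 is well taken.
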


\begin{cor}
Let a baric algebra $A$ has a basis $e_1,e_2,...,e_n$ such that its structural constants $\gamma_{ijk}$ satisfy the equality
$$\gamma_{ijk}=\gamma_{ij'k},$$
for any $i,j,j',k$ ($1\leq i,j,j',k\leq n$). Then $A$ has a unique weight homomorphism.
\end{cor}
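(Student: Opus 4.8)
The plan is to derive this from Lemma 2.9 together with the assumption that $A$ is baric. By Lemma 2.9, to know that $A$ has \emph{not more than} one weight homomorphism it suffices to check that every solution $(\alpha_1,\dots,\alpha_n)$ of the Etherington's system with respect to $e_1,\dots,e_n$ satisfies $\alpha_1=\dots=\alpha_n$; and since a baric algebra has, by definition, at least one weight homomorphism, this will already give uniqueness.

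So first I would fix an arbitrary solution $(\alpha_1,\dots,\alpha_n)$. The whole point of the hypothesis $\gamma_{ijk}=\gamma_{ij'k}$ is that the right-hand side of the $(i,j)$-th equation of the system does not depend on $j$; hence for all indices $i,j,j'$
$$\alpha_i\alpha_j=\sum_{k=1}^{n}\gamma_{ijk}\alpha_k=\sum_{k=1}^{n}\gamma_{ij'k}\alpha_k=\alpha_i\alpha_{j'}.$$
If the solution is trivial, its coordinates are all $0$ and there is nothing to prove. Otherwise I would pick an index $i_0$ with $\alpha_{i_0}\neq 0$; since $K$ is a field we may cancel $\alpha_{i_0}$ in $\alpha_{i_0}\alpha_j=\alpha_{i_0}\alpha_{j'}$ to obtain $\alpha_j=\alpha_{j'}$ for all $j,j'$, that is, $\alpha_1=\dots=\alpha_n$. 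This verifies the hypothesis of Lemma 2.9, and invoking baricity completes the argument.

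I do not expect any genuine obstacle here; the proof is a short manipulation of the defining equations. The only thing that needs a little care is the bookkeeping at the end — Lemma 2.9 by itself yields only ``not more than one'' weight homomorphism, so one must separately use that $A$ is baric to turn this into uniqueness. (Equivalently, one could argue directly via Theorem 2.6 and Lemma 2.8: the non-trivial solution supplied by baricity is forced to be of the form $(\alpha,\dots,\alpha)$ with $\alpha\neq 0$, and Lemma 2.8 then identifies $\alpha$ with the common value $\sum_{k=1}^{n}\gamma_{ijk}$, so no second non-trivial solution can exist.)
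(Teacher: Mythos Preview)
Your proposal is correct and follows essentially the same route as the paper: both arguments pick a coordinate $\alpha_{i_0}\neq 0$, use the hypothesis $\gamma_{ijk}=\gamma_{ij'k}$ to get $\alpha_{i_0}\alpha_j=\alpha_{i_0}\alpha_{j'}$ (the paper writes this as $\alpha_i^2=\alpha_i\alpha_j$), cancel, and then invoke Lemma~2.9. Your explicit remark that baricity supplies existence is a small clarification the paper leaves implicit.
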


\begin{proof}
Let $(\alpha_1,\alpha_2,...,\alpha_n)$ be a solution of the Etherington's system, and let $\alpha_i\neq 0$. The equality $\alpha_{i}^{2}=\alpha_i\alpha_j$ implies that $\alpha_i=\alpha_j$, for any $j$. Now it suffices to apply Lemma 2.9.\end{proof}

\begin{rem}
Note that, in the case where $e_1,e_2,...,e_n$ is semi-natural and all structural coefficients are equal to one another (and are equal to, say, $\gamma$),  the claim of Corollary 2.10 follows also from the sufficient condition for the uniqueness of the weight homomorphism by Holgate mentioned in the Introduction. Indeed, let $\omega$ be a homomorphism with $\omega(e_i)=1$, for all $i$; it is an algebra homomorphism. Let $a\in Ker \; \omega$ and $a=\sum_{i=1}^{n}\alpha_i e_i$. Then $\omega(a)=\sum_{i=1}^{n}\alpha_i=0,$ and we have
$$a^2=(\sum_{i=1}^{n}\alpha_i e_i)^{2}=\sum_{k=1}^{n}\sum_{m=1}^{n}(\alpha_k\alpha_m)(e_ke_m)=$$
$$\gamma(\sum_{k=1}^{n}\sum_{m=1}^{n}\alpha_k\alpha_m)\sum_{i=1}^{n}e_i=\gamma(\sum_{i=1}^{n}\alpha_i)^{2}\sum_{j=1}^{n}e_j=0.$$
Therefore, $Ker \ \omega$ is nil.
\end{rem}

Finally, with the aid of Theorem 2.6, we provide an example of a baric algebra with two weight homomorphisms. Note that it does not fall under the scope of the Etherington's sufficient condition mentioned in the Introduction as the algebra considered in this example is not commutative.

\begin{exmp}

Let $A$ be the 3-dimensional algebra with the following Etherington's system in some basis:
$$x_{1}^{2}=x_2,\;  x_2^2=x_2, \; x_3^2= x_2,$$
$$x_1x_2= x_1, x_2x_1=x_3, x_1x_3=x_2, x_3x_1=x_2, x_2x_3=x_3, x_3x_2=x_3.$$

It is easy to see that this system has precisely two non-trivial solutions $(1,1,1)$ and $(-1,1,-1)$, and hence the algebra $A$ has two weight homomorphisms.  
\end{exmp}

\begin{rem}
In view of Example 2.12, observe that a $2$-dimensional non-commutative baric algebra has precisely one weight homomorphism. Indeed, the Etherington's system yields the equation
$$\gamma_{121}x_1+\gamma_{122}x_2=\gamma_{211}x_1+\gamma_{212}x_2.$$
Since $A$ is not commutative, either $\gamma_{121}\neq \gamma_{211}$ or $\gamma_{122}\neq\gamma_{212}$. Assume that the former inequality holds. Then $x
_1$ can be expressed as a constant multiplied by $x_2$. Assuming that $x_2\neq0$ and substituting this expression for $x_1$ to the equation
$$x_2^2=\gamma_{221}x_1+\gamma_{222}x_2,$$
 we obtain a unique value of $x_2$.
 
\end{rem}

\section{Weight homomorphisms, semi-natural bases, and transition matrices}
 Let $A$ be an $n$-dimensional algebra, and let $e_1,e_2,..., e_n$ and $f_1,f_2,...,f_n$ be its bases. We use the term `transition matrix from the basis $e_1,e_2,..., e_n$ to the basis $f_1,f_2,...,f_n$' for the matrix $(\sigma_{ik})_{1\leq i,k\leq n}$, where
\begin{equation}
f_i=\sum_{k=1}^{n}\sigma_{ik}e_k,
\end{equation}
\noindent for any $i$ ($i=1,...,n)$. 

\begin{prop}
Let $A$ be an $n$-dimensional algebra, and let $e_1,e_2,..., e_n$ and $f_1,f_2,...,f_n$ be its bases. Let $M=(\sigma_{ij})_{1\leq i,j\leq n}$ be the transition matrix from $e_1,e_2,..., e_n$ to  $f_1,f_2,...,f_n$. The basis $e_1,e_2,..., e_n$ is semi-natural if and only if the $n$-tuple $(\alpha_1,\alpha_2,...,\alpha_n)$ with 
\begin{equation}
\alpha_i=\sum_{k=1}^{n}\sigma_{ik} \; \; \; (1\leq i\leq n)
\end{equation} 
 is a solution of the Etherington's system of equations with respect to the  basis $f_1,f_2,..., f_n$.

\end{prop}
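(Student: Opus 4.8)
The plan is to recast the statement in terms of weight homomorphisms, using Proposition 2.1 (which identifies solutions of an Etherington's system with algebra homomorphisms $A\to K$) and Lemma 2.3 (which characterizes semi-natural bases by the membership of $(1,\ldots,1)$ in the Etherington's system), and then to exploit the invertibility of the transition matrix $M$.

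For the ``only if'' direction, suppose $e_1,\ldots,e_n$ is semi-natural. Then Lemma 2.3 gives $(1,1,\ldots,1)\in Eth(e_1,\ldots,e_n)$, so by Proposition 2.1 there is a weight homomorphism $w$ of $A$ with $w(e_i)=1$ for all $i$. Applying $w$ to $f_i=\sum_{k}\sigma_{ik}e_k$ and using linearity gives $w(f_i)=\sum_{k}\sigma_{ik}w(e_k)=\sum_{k}\sigma_{ik}=\alpha_i$. Hence $(\alpha_1,\ldots,\alpha_n)=(w(f_1),\ldots,w(f_n))$, which by Proposition 2.1 applied to the basis $f_1,\ldots,f_n$ is a solution of the Etherington's system with respect to $f_1,\ldots,f_n$.

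For the ``if'' direction, I would first observe that the given $n$-tuple $(\alpha_1,\ldots,\alpha_n)$ is necessarily non-trivial: if all $\alpha_i$ vanished, then $\sum_{k}\sigma_{ik}=0$ for every $i$, so $M$ would send the all-ones vector to $0$, contradicting the invertibility of a transition matrix between bases. Thus Proposition 2.1 (for the basis $f_1,\ldots,f_n$) yields a weight homomorphism $w$ with $w(f_i)=\alpha_i$. Expanding $w(f_i)$ by linearity as above gives $\sum_{k}\sigma_{ik}w(e_k)=\alpha_i=\sum_{k}\sigma_{ik}$, that is, $\sum_{k}\sigma_{ik}\bigl(w(e_k)-1\bigr)=0$ for all $i$; since $M$ is invertible this forces $w(e_k)=1$ for every $k$, so $(1,\ldots,1)\in Eth(e_1,\ldots,e_n)$ by Proposition 2.1, and Lemma 2.3 then shows that $e_1,\ldots,e_n$ is semi-natural.

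I do not expect a serious obstacle. The only delicate point is in the ``if'' direction, where one must exclude the trivial solution before invoking Proposition 2.1 to obtain an actual weight homomorphism, and this is precisely where the invertibility of $M$ enters; the same fact is used once more to deduce $w(e_k)=1$ from $\sum_{k}\sigma_{ik}(w(e_k)-1)=0$. A more pedestrian alternative would be to express the structural constants of $A$ in the basis $f_1,\ldots,f_n$ explicitly in terms of those in $e_1,\ldots,e_n$ and the entries of $M$, and then verify the Etherington's equations directly, but the homomorphism-theoretic argument avoids that bookkeeping.
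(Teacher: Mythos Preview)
Your argument is correct and, as you yourself anticipate in the closing remark, it is \emph{not} the route the paper takes: the paper proves both directions by the ``pedestrian alternative'' you mention, expanding $f_if_j$ (respectively $e_ie_j$) through the structural constants and the entries of $M$ (respectively $M^{-1}$) and summing all coefficients. Your proof instead trades on Proposition~2.1, so that a solution of the Etherington system is the same data as a weight homomorphism $w$, and then the passage between the two bases is just the linear change $w(f_i)=\sum_k\sigma_{ik}w(e_k)$; invertibility of $M$ does the rest. This is shorter and avoids any manipulation of the structural constants $\gamma_{ijk}$ or $\xi_{ijk}$. The only price you pay is the small extra step of checking non-triviality of $(\alpha_1,\ldots,\alpha_n)$ before invoking Proposition~2.1, which you handle cleanly. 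One cosmetic point: the lemma you cite as ``Lemma~2.3'' is Lemma~2.5 in the paper's numbering.
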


\begin{proof}"Only if": Let the structural constants of the bases $e_1,e_2,..., e_n$ and $f_1,f_2,...,f_n$ be resp. $\gamma_{ijk}$ and $\xi_{ijk}$ $(1\leq i,j,k\leq n)$. 
We have 
\begin{equation}
f_if_j=(\sum_{k=1}^{n}\sigma_{ik}e_k)(\sum_{l=1}^{n}\sigma_{jl}e_l)=\sum_{m=1}^{n}\sum_{k=1}^{n}\sum_{l=1}^{n}(\sigma_{ik}\sigma_{jl}\gamma_{klm})e_m.
\end{equation}
Further, considering the fact that the basis $e_1,e_2,...,e_n$ is semi-natural, we obtain
$$\sum_{m=1}^{n}\sum_{k=1}^{n}\sum_{l=1}^{n}(\sigma_{ik}\sigma_{jl}\gamma_{klm})=\sum_{k=1}^{n}\sum_{l=1}^{n}\sigma_{ik}\sigma_{jl}(\sum_{m=1}^{n}\gamma_{klm})=\sum_{k=1}^{n}\sum_{l=1}^{n}\sigma_{ik}\sigma_{jl}=$$
\begin{equation}
\sum_{k=1}^{n}\sigma_{ik}\sum_{l=1}^{n}\sigma_{jl}=\alpha_i\alpha_j.
\end{equation}
On the other hand, we have
\begin{equation}
f_if_j=\sum_{m=1}^{n}\xi_{ijm}f_{m}=\sum_{m=1}^{n}\xi_{ijm}\sum_{k=1}^{n}\sigma_{mk}e_k=\sum_{k=1}^{n}(\sum_{m=1}^{n}\xi_{ijm}\sigma_{mk})e_k.
\end{equation}
The sum of coefficients in expression (3.5) is
\begin{equation}
\sum_{k=1}^{n}\sum_{m=1}^{n}\xi_{ijm}\sigma_{mk}=\sum_{m=1}^{n}\xi_{ijm}\sum_{k=1}^{n}\sigma_{mk}=\sum_{m=1}^{n}\xi_{ijm}\alpha_m.
\end{equation}
Equalities (3.3)-(3.6) imply that the $n$-tuple $(\alpha_1,\alpha_2,...\alpha_n)$ is a solution of the Etherington's system of equations with respect to the basis $f_1,f_2,...,f_m$.

"If": Let the inverse of the matrix $M$ be $M^{-1}=(\varrho_{ij})_{1\leq i,j\leq n}$. Then
\begin{equation}
e_i=\sum_{k=1}^{n}\varrho_{ik}f_k.
\end{equation}
Hence
\begin{equation}
e_ie_j=\sum_{m=1}^{n}\sum_{k=1}^{n}\sum_{l=1}^{n}(\varrho_{ik}\varrho_{jl}\xi_{klm})f_m=\sum_{m=1}^{n}\sum_{k=1}^{n}\sum_{l=1}^{n}(\varrho_{ik}\varrho_{jl}\xi_{klm})\sum_{p=1}^{n}\sigma_{mp}e_p=
\end{equation}
$$\sum_{p=1}^{n}\sum_{k=1}^{n}\sum_{l=1}^{n}(\varrho_{ik}\varrho_{jl}\sum_{m=1}^{n}\xi_{klm}\sigma_{mp})e_p.$$
Since $(\alpha_1,\alpha_2,...,\alpha_n)$ is a solution of the Etherington's system, we have
$$\sum_{p=1}^{n}\sum_{k=1}^{n}\sum_{l=1}^{n}\varrho_{ik}\varrho_{jl}\sum_{m=1}^{n}\xi_{klm}\sigma_{mp}=\sum_{k=1}^{n}\sum_{l=1}^{n}\varrho_{ik}\varrho_{jl}\sum_{m=1}^{n}\xi_{klm}\sum_{p=1}^{n}\sigma_{mp}=$$
$$\sum_{k=1}^{n}\sum_{l=1}^{n}\varrho_{ik}\varrho_{jl}\sum_{m=1}^{n}\xi_{klm}\alpha_m=\sum_{k=1}^{n}\sum_{l=1}^{n}\varrho_{ik}\varrho_{jl}\alpha_k\alpha_l=$$
$$\sum_{k=1}^{n}\varrho_{ik}\alpha_k\sum_{l=1}^{n}\varrho_{jl}\alpha_l.$$
Recall that the matrices $(\sigma_{ij})_{1\leq i,j\leq n}$ and $(\varrho_{ij})_{1\leq i,j\leq n}$ are inverses of each other. This implies that $$\sum_{k=1}^{n}\varrho_{ik}\alpha_k=1.$$ Thus the sum of coefficients in (3.8) is equal to 1.
\end{proof}

\begin{lem}
Let $\alpha_1,\alpha_2,...,\alpha_n$ be elements of a field $K$. There is a non-singular matrix $M=(\sigma_{ik})_{1\leq i,k\leq n}$ such that
\begin{equation}
\sum_{k=1}^{n}\sigma_{ik}=\alpha_i,
\end{equation}
\noindent for any $i$, if and only if at least one $\alpha_i$ is not zero. 
\end{lem}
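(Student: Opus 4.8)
The plan is to prove both directions of the equivalence. The ``only if'' direction is immediate: if such a non-singular matrix $M=(\sigma_{ik})$ exists with $\sum_k\sigma_{ik}=\alpha_i$ for all $i$, and all $\alpha_i$ were zero, then every row of $M$ would sum to zero, so the column vector $(1,1,\dots,1)^T$ would lie in the kernel of $M$, contradicting non-singularity. Hence at least one $\alpha_i$ is nonzero.

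For the ``if'' direction, suppose $\alpha_{i_0}\neq 0$ for some index $i_0$. I would construct $M$ explicitly. The simplest approach is to start from a matrix whose rows already have the prescribed sums and then repair non-singularity. For instance, take the diagonal matrix $D=\operatorname{diag}(\alpha_1,\dots,\alpha_n)$, whose $i$-th row sums to $\alpha_i$; this has the right row sums but is singular whenever some $\alpha_i=0$. To fix this without disturbing the row sums, I would add to $D$ a matrix each of whose rows sums to zero, chosen so that the result is invertible. Concretely, for each row $i$ with $\alpha_i=0$, replace the two entries in columns $i_0$ and, say, some other fixed column (or column $i$ itself if $i\neq i_0$) by a pair $t,-t$ with $t\neq 0$; more cleanly, one can simply set $\sigma_{ii}=1$ whenever $\alpha_i=0$ is problematic — but that changes the row sum. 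The safe version: for each $i$ with $\alpha_i=0$, put $\sigma_{ii}=1$ and $\sigma_{i i_0}=-1$ (these are distinct columns since $i\neq i_0$ when $\alpha_i=0\neq\alpha_{i_0}$), and set all other entries of that row to zero; for each $i$ with $\alpha_i\neq 0$, take row $i$ to be $\alpha_i$ in column $i$ and zero elsewhere. Then every row sums correctly, and I would verify non-singularity by computing the determinant.

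The determinant computation is the one step requiring a little care, so that is where I would focus. With the construction above, after reordering so that the indices with $\alpha_i\neq 0$ come first, the matrix is block lower- or upper-triangular-ish: the rows with $\alpha_i\neq 0$ contribute a diagonal block with nonzero diagonal entries $\alpha_i$, and each ``bad'' row contributes a $1$ on the diagonal (in column $i$) together with a $-1$ in column $i_0$. Expanding along the bad rows, or observing that the submatrix structure is triangular with respect to a suitable ordering, one sees that $\det M = \big(\prod_{i:\,\alpha_i\neq 0}\alpha_i\big)\cdot 1^{\#\{i:\alpha_i=0\}}\neq 0$, since the $-1$ entries all sit in column $i_0$ which already has its diagonal contribution $\alpha_{i_0}$ and the bad rows have their leading $1$ in pairwise distinct columns $i$. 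Thus $M$ is non-singular, completing the proof.

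I expect the main obstacle to be purely bookkeeping: making the explicit matrix clean enough that the determinant is manifestly nonzero without a messy cofactor expansion. An alternative that sidesteps this is to invoke a genericity argument — the set of matrices with prescribed row sums is an affine subspace (non-empty, since $D$ lies in it), and the singular matrices form a proper Zariski-closed subset of it provided that affine subspace is not entirely contained in the hypersurface $\det = 0$; exhibiting one non-singular point (e.g. the construction above, or even just checking that a single explicit numerical instance works when the ground field is infinite) then suffices. But since $K$ may be finite, I would stick with the explicit construction, which works over any field.
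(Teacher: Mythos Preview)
Your proof is correct. The ``only if'' direction matches the paper's exactly (the all-ones vector in the kernel is precisely the linear dependence of columns the paper invokes). For the ``if'' direction you take a genuinely different route: the paper argues by induction on $n$, adjoining a new first row $(\alpha_1,0,\dots,0)$ and first column $(\alpha_1,0,\dots,0)^T$ to an $(n-1)\times(n-1)$ witness for $\alpha_2,\dots,\alpha_n$, whereas you give a one-shot explicit matrix---diagonal entries $\alpha_i$ (or $1$ when $\alpha_i=0$) with a compensating $-1$ in column $i_0$ for the zero rows. Your determinant check is clean: since row $i_0$ has its only nonzero entry on the diagonal, the identity is the unique permutation contributing, giving $\det M=\prod_{\alpha_i\neq 0}\alpha_i\neq 0$. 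Your construction has the minor advantage that it handles uniformly the case where only a single $\alpha_i$ is nonzero, which the paper's inductive step as written does not directly cover (the hypothesis for $M'$ would fail). The paper's approach, on the other hand, avoids any determinant computation at all. Your closing remark about genericity and finite fields is apt; the explicit construction is the right choice here.
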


\begin{proof} 
"Only if": If all of $\alpha_i$ were equal to zero, then the columns of $M$ would be lineraly dependent, which would be a contradiction.

"If": We apply the principle of the mathematical induction by $n$. For $n=1, 2$, the statement is obvious. Assume that $n\geq 3$, and that the statement is valid for $n-1$. Let $a_1\neq 0$, and $M'$ be a non-singular $
(n-1)\times (n-1)$ matrix such that the sum of entries in its rows are equal to resp. $\alpha_2, \alpha_3,...,\alpha_n$. Let $M$ be the $n\times n$ matrix obtained from $M'$ by adding an $n\times 1$ column on the left and an $1\times n$ row above $M'$ such that the first entry of the added column and row is $\alpha_1$, while all other entries are zeros.


If $a_1=0$ and $a_k\neq 0$, then we can construct the sought-for-matrix $M$ for the sequence $\alpha_k,\alpha_2,...,\alpha_1,...\alpha_n$ (where $a_1$ presents on the $k$th position), and then permute the first and the $k$th rows in $M$.

\end{proof}

Recall that a matrix is called row stochastic (resp. column  stochastic) if the sum of entries in any row (resp. column) is equal to 1. The following lemma is well-known (see e.g., \cite{P}).

\begin{lem} 
The set $RS_n(K)$ (resp. $CS_n(K)$) of non-singular $n\times n$ row stochastic (resp. column stochastic) matrices is a subgroup of the general linear group $GL_n(K)$. 
\end{lem}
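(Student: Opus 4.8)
The plan is to verify directly that $RS_n(K)$ (the non-singular row stochastic matrices) is closed under multiplication and inversion, and contains the identity; the argument for $CS_n(K)$ is then obtained by transposition, since a matrix is column stochastic exactly when its transpose is row stochastic, and transposition is an anti-automorphism of $GL_n(K)$ that preserves non-singularity. The identity matrix $I_n$ is clearly row stochastic and non-singular, so $I_n\in RS_n(K)$.

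For closure under products, the key observation is that a matrix $M=(\sigma_{ik})$ is row stochastic if and only if $M\mathbf{1}=\mathbf{1}$, where $\mathbf{1}=(1,1,\dots,1)^{\mathsf T}$ is the all-ones column vector; this is just a restatement of the condition $\sum_{k=1}^{n}\sigma_{ik}=1$ for every $i$. Hence if $M$ and $N$ both fix $\mathbf{1}$ then so does their product $MN$, since $(MN)\mathbf{1}=M(N\mathbf{1})=M\mathbf{1}=\mathbf{1}$, and $MN$ is non-singular as a product of non-singular matrices; thus $MN\in RS_n(K)$. For closure under inversion, if $M\in RS_n(K)$ then $M$ is invertible and $M\mathbf{1}=\mathbf{1}$ gives $M^{-1}\mathbf{1}=M^{-1}(M\mathbf{1})=\mathbf{1}$, so $M^{-1}$ is again row stochastic and non-singular, i.e. $M^{-1}\in RS_n(K)$. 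This establishes that $RS_n(K)$ is a subgroup of $GL_n(K)$.

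For $CS_n(K)$: the map $M\mapsto M^{\mathsf T}$ is a bijection of $GL_n(K)$ onto itself satisfying $(MN)^{\mathsf T}=N^{\mathsf T}M^{\mathsf T}$ and $(M^{-1})^{\mathsf T}=(M^{\mathsf T})^{-1}$, and $M$ is column stochastic precisely when $M^{\mathsf T}$ is row stochastic. Since $RS_n(K)$ is a subgroup, its image under the anti-automorphism $M\mapsto M^{\mathsf T}$ — which is exactly $CS_n(K)$ — is also a subgroup of $GL_n(K)$. Alternatively one can argue symmetrically using the row vector $\mathbf{1}^{\mathsf T}$: a matrix $N$ is column stochastic iff $\mathbf{1}^{\mathsf T}N=\mathbf{1}^{\mathsf T}$, and the same one-line computations as above go through.

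There is essentially no obstacle here; the only point requiring a moment's care is that one must use non-singularity to form $M^{-1}$ and to know the product of non-singular matrices is non-singular, but both facts are standard. The lemma is indeed, as the text says, well-known, and the proof is the short eigenvector-at-$1$ argument above.
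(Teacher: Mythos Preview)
Your proof is correct and is the standard eigenvector-at-$\mathbf{1}$ argument. Note that the paper itself gives no proof of this lemma at all: it simply states the result as well-known and cites \cite{P}, so there is nothing to compare against beyond observing that your argument is exactly the expected one.
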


\begin{theo}
Let $A$ be a baric algebra, and $e_1,e_2,...,e_n$ be its semi-natural basis. The following conditions are equivalent:

(i) $A$ has a unique weight homomorphism;

(ii) the transition matrix from any semi-natural basis to $e_1,e_2,...,e_n$ is row stochastic;
\vskip+1mm
(iii)  the transition matrix from any semi-natural basis $e'_1,e'_2,...,e'_n$ to any semi-natural basis $e''_1,e''_2,...,e''_n$ is row stochastic.
\end{theo}

\begin{proof}
The implications (i)$\Rightarrow$(ii) and (i)$\Rightarrow$(iii) follow from Theorem 2.6 and Proposition 3.1. 

(ii)$\Rightarrow$(i): Let $$(\alpha_1,\alpha_2,...,\alpha_n)\in Eth(e_1,e_2,...,e_n).$$ By Lemma 3.2, there is a non-singular matrix $M=(\sigma_{ik})_{1\leq i,k\leq n}$ such that equality (3.9) holds for any $i$ ($i=1,2,...,n)$. Let $e'_1,e'_2,...,e'_n$ be the basis of $A$ such that the transition matrix from it to the basis $e_1,e_2,...,e_n$ is $M$. According to Proposition 3.1, the basis $e'_1,e'_2,...,e'_n$ is semi-natural. Therefore, the matrix $M$ is row stochastic, and hence $\alpha_i=1$, for any $i$ ($i=1,2,...,n)$. Now it suffices to apply Theorem 2.6.

The implication (iii)$\Rightarrow$(ii) is obvious.
\end{proof}

\
\section{Additional remarks}
Let $$\mathbf{F}:GL_n(K)\rightarrow K^n\setminus \lbrace (0,0,...,0)\rbrace$$
\noindent  be the mapping sending a non-singular matrix $M=(\sigma_{ij})_{1\leq i,j\leq n}$ to the $n$-tuple $(\alpha_1,\alpha_2,...,\alpha_n)$ with
\begin{equation}
\alpha_i=\sum_{k=1}^{n} \sigma_{ik}.
\end{equation}
\noindent Lemma 3.2 ensures that all $\alpha_i$'s cannot be equal to zero. Note that if $n>1$, then the mapping $\mathbf{F}$ is not a homomorphism (of monoids) (for instance, $\mathbf{F}$ does not preserve the product of the matrix where all entries of the first column are $1$, while all other entries are $0$, and an arbitrary matrix $M$ with $\alpha_1\neq \alpha_2$).

Lemma 3.2 immediately implies

\begin{lem}
The mapping $\mathbf{F}$ is surjective.
\end{lem}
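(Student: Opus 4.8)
The claim to prove is that the mapping $\mathbf{F}\colon GL_n(K)\to K^n\setminus\{(0,\dots,0)\}$, sending $M=(\sigma_{ij})$ to $(\alpha_1,\dots,\alpha_n)$ with $\alpha_i=\sum_k\sigma_{ik}$, is surjective. The plan is to read this off directly from Lemma~3.2. Indeed, Lemma~3.2 asserts that for any tuple $(\alpha_1,\dots,\alpha_n)$ of field elements, there exists a non-singular matrix $M=(\sigma_{ik})$ with $\sum_k\sigma_{ik}=\alpha_i$ for all $i$ \emph{if and only if} at least one $\alpha_i$ is nonzero. The "if" direction of that lemma is precisely a constructive preimage.

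So the proof is essentially one line: given an arbitrary element $(\alpha_1,\dots,\alpha_n)$ of the codomain $K^n\setminus\{(0,\dots,0)\}$, by definition not all $\alpha_i$ vanish, hence by the "if" part of Lemma~3.2 there is a non-singular matrix $M$ with row sums $\alpha_i$, i.e.\ $\mathbf{F}(M)=(\alpha_1,\dots,\alpha_n)$. Since the target tuple was arbitrary, $\mathbf{F}$ is surjective. One should also note (as the excerpt already does in the sentence "Lemma 3.2 ensures that all $\alpha_i$'s cannot be equal to zero") that $\mathbf{F}$ does land in $K^n\setminus\{(0,\dots,0)\}$, which is the "only if" direction of Lemma~3.2 applied to the row sums of a non-singular matrix; this just confirms the codomain is correctly specified and nothing extra is needed.

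There is no real obstacle here: the statement is an immediate corollary, and the only "work" was already done inside Lemma~3.2's inductive construction (build an $(n-1)\times(n-1)$ matrix with the prescribed row sums $\alpha_2,\dots,\alpha_n$, border it with a row and column whose only nonzero entry is $\alpha_1$ in the corner, and permute rows if $\alpha_1=0$). Accordingly I would write the proof as simply invoking Lemma~3.2, perhaps spelling out the quantifier structure for clarity. If one wanted a fully self-contained argument one could instead exhibit an explicit preimage: for a given nonzero tuple, take (after a permutation moving a nonzero coordinate to position $1$) the matrix $\alpha_1 e_{11}+\sum_{i=2}^n(\alpha_i e_{i1}+ (\text{something})\dots)$ — but this is exactly re-deriving Lemma~3.2, so citing it is cleaner and is clearly the intended route.

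\begin{proof}
Let $(\alpha_1,\alpha_2,\dots,\alpha_n)$ be an arbitrary element of $K^n\setminus\{(0,0,\dots,0)\}$. Then at least one $\alpha_i$ is nonzero, so by the "if" part of Lemma~3.2 there exists a non-singular matrix $M=(\sigma_{ik})_{1\leq i,k\leq n}$ with $\sum_{k=1}^{n}\sigma_{ik}=\alpha_i$ for every $i$. By the definition of $\mathbf{F}$ this means $\mathbf{F}(M)=(\alpha_1,\alpha_2,\dots,\alpha_n)$. Hence $\mathbf{F}$ is surjective.
\end{proof}
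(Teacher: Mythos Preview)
Your proof is correct and matches the paper's approach exactly: the paper simply states that Lemma~3.2 immediately implies the surjectivity of $\mathbf{F}$, and your argument just spells out that implication.
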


Let $A$ be an $n$-dimensional algebra, and  $S$ be the set of semi-natural bases of $A$. Let $f_1,f_2,...,f_n$ be a basis of $A$, and
$$\mathbf{M}: S\rightarrow GL_n(K)$$
be the mapping that sends a semi-natural basis $e_1,e_2,...,e_n$ to the transition matrix $M=(\sigma_{ik})_{1\leq i,k\leq n}$ from this basis to $f_1,f_2,...,f_m$.
\vskip+1mm
Proposition 3.1 and Lemma 3.2 imply 

\begin{lem} There is a unique mapping $\mathbf{F}'$ rendering the following square a pullback (of sets):
\begin{equation}
\xymatrix{S \; \ar@{>->}[r]^{\mathbf{M}}\ar[d]_{\mathbf{F
}'}&GL_n(K)\ar[d]^{\mathbf{F}}\\
Eth(f_1,f_2,...,f_m) \; \ar@{>->}[r]^-{\mathbf{I}}&K^n\setminus \lbrace (0,0,...,0)\rbrace}
\end{equation}
Here $\mathbf{I}$ is the embedding mapping.

\end{lem}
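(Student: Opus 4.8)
The plan is to verify that the square (4.2) is a pullback by exhibiting, for the candidate $\mathbf{F}'$ forced by commutativity, that the pair $(\mathbf{M},\mathbf{F}')$ satisfies the universal property, and then checking uniqueness separately. First I would define $\mathbf{F}'$ on objects: for a semi-natural basis $e_1,e_2,\dots,e_n$ with transition matrix $M=(\sigma_{ik})$ to $f_1,f_2,\dots,f_n$, Proposition 3.1 tells us that the $n$-tuple $(\alpha_1,\dots,\alpha_n)$ with $\alpha_i=\sum_{k=1}^n\sigma_{ik}$ lies in $Eth(f_1,f_2,\dots,f_n)$; this $n$-tuple is exactly $\mathbf{F}(\mathbf{M}(e_1,\dots,e_n))$, so setting $\mathbf{F}'(e_1,\dots,e_n)$ to be this tuple makes the square commute (using that $\mathbf{I}$ is just the inclusion). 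Moreover any $\mathbf{F}'$ making the square commute must agree with $\mathbf{F}\circ\mathbf{M}$ as a map into $K^n\setminus\{0\}$, and since $\mathbf{I}$ is injective this pins down $\mathbf{F}'$ uniquely into $Eth(f_1,\dots,f_n)$; that gives the "unique" clause.

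Next I would check the pullback property. Let $X$ be any set with maps $g\colon X\to GL_n(K)$ and $h\colon X\to Eth(f_1,\dots,f_n)$ such that $\mathbf{F}\circ g=\mathbf{I}\circ h$. I must produce a unique $u\colon X\to S$ with $\mathbf{M}\circ u=g$ and $\mathbf{F}'\circ u=h$. Since $\mathbf{M}$ is injective (a transition matrix determines the basis: $e_i=\sum_k(\text{entries of }M^{-1})\,f_k$ — here I would just note $M$ determines $e_1,\dots,e_n$ from the fixed $f_1,\dots,f_n$), uniqueness of $u$ is automatic once existence is shown, and $\mathbf{F}'\circ u=h$ will follow from $\mathbf{M}\circ u=g$ by commutativity plus injectivity of $\mathbf{I}$. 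For existence: given $x\in X$, the matrix $g(x)$ is the transition matrix from the basis $e_i:=\sum_k g(x)_{ik}f_k$ to $f_1,\dots,f_n$; the hypothesis $\mathbf{F}(g(x))=h(x)\in Eth(f_1,\dots,f_n)$ says precisely that the row sums of $g(x)$ form a solution of the Etherington system with respect to $f_1,\dots,f_n$, so by the "if" direction of Proposition 3.1 the basis $e_1,\dots,e_n$ is semi-natural, i.e. lies in $S$. Define $u(x)$ to be this basis; then $\mathbf{M}(u(x))=g(x)$ by construction.

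The one genuine content-bearing step — and the place I would be most careful — is the appeal to the "if" direction of Proposition 3.1 in the existence argument: I must make sure the hypothesis in (4.2), namely that $h(x)$ already lies in $Eth(f_1,\dots,f_n)$, is exactly the hypothesis Proposition 3.1 needs, rather than merely that the row-sum tuple is nonzero. This is why the codomain of $\mathbf{F}'$ is $Eth(f_1,\dots,f_n)$ and not all of $K^n\setminus\{0\}$: without the Etherington condition the corresponding basis need not be semi-natural, so $u(x)$ would not land in $S$. Everything else (injectivity of $\mathbf{M}$, injectivity of $\mathbf{I}$, commutativity bookkeeping) is routine. I would therefore write the proof as: (1) cite Proposition 3.1 for well-definedness of $\mathbf{F}'$ and commutativity; (2) cite Lemma 3.2 only insofar as it guarantees $\mathbf{F}$ lands in $K^n\setminus\{0\}$ so the diagram typechecks; (3) verify the universal property by the construction above; (4) note uniqueness of $\mathbf{F}'$ from injectivity of $\mathbf{I}$.
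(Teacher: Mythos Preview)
Your proof is correct and follows the paper's approach exactly: the paper simply asserts that the lemma follows from Proposition~3.1 and Lemma~3.2, and you have unpacked precisely that---Proposition~3.1 (both directions) shows that $\mathbf{M}$ identifies $S$ with the set of matrices in $GL_n(K)$ whose row sums lie in $Eth(f_1,\dots,f_n)$, while Lemma~3.2 ensures $\mathbf{F}$ lands in $K^n\setminus\{0\}$. One small slip: by the paper's convention (3.1) the transition matrix $M=(\sigma_{ik})$ from $e$ to $f$ satisfies $f_i=\sum_k\sigma_{ik}e_k$, so your formula $e_i:=\sum_k g(x)_{ik}f_k$ actually defines the basis with transition matrix $g(x)$ \emph{from} $f$, not \emph{to} $f$; as you yourself note in the earlier parenthetical, one should use $g(x)^{-1}$ here, but this does not affect the argument.
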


\begin{cor}
For any basis $f_1,f_2,...,f_n$ of $A$, the set $S$ is bijective to the inverse image of $Eth(f_1,f_2,...,f_m)$ under $\mathbf{F}$. 
\end{cor}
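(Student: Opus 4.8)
The plan is to read the statement off from Lemma 4.3 by specializing the standard description of pullbacks in the category of sets. First I would recall that if $g\colon X\to Z$ is any map and $Y\hookrightarrow Z$ is the inclusion of a subset, then the pullback of $g$ along this inclusion is, up to canonical isomorphism, the subset $g^{-1}(Y)\subseteq X$, with the projection to $X$ being the inclusion $g^{-1}(Y)\hookrightarrow X$ and the projection to $Y$ the corestriction of $g$. Taking $g=\mathbf{F}$ and the subset inclusion to be $\mathbf{I}\colon Eth(f_1,f_2,\dots,f_n)\hookrightarrow K^n\setminus\{(0,0,\dots,0)\}$, the pullback appearing in Lemma 4.3 is therefore (isomorphic to) $\mathbf{F}^{-1}\big(Eth(f_1,f_2,\dots,f_n)\big)$.

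Next I would invoke Lemma 4.3 itself, which asserts that the square with top-left vertex $S$, top edge $\mathbf{M}$, left edge $\mathbf{F}'$, right edge $\mathbf{F}$ and bottom edge $\mathbf{I}$ is a pullback. Since any two pullbacks of $\mathbf{F}$ along $\mathbf{I}$ are isomorphic by a unique isomorphism compatible with the projections, there is a bijection $S\xrightarrow{\ \sim\ }\mathbf{F}^{-1}\big(Eth(f_1,f_2,\dots,f_n)\big)$ under which $\mathbf{M}$ becomes the inclusion into $GL_n(K)$; in other words this bijection is precisely the corestriction of $\mathbf{M}$. So the content to verify is that $\mathbf{M}$ is injective with image exactly $\mathbf{F}^{-1}\big(Eth(f_1,f_2,\dots,f_n)\big)$. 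Injectivity is immediate, since a semi-natural basis $e_1,\dots,e_n$ is recovered from $f_1,\dots,f_n$ together with the transition matrix $\mathbf{M}(e_1,\dots,e_n)$. The description of the image is exactly Proposition 3.1 (a basis $e_1,\dots,e_n$ with transition matrix $M$ to $f_1,\dots,f_n$ is semi-natural if and only if the row-sum vector $\mathbf{F}(M)$ lies in $Eth(f_1,\dots,f_n)$), and surjectivity onto that image uses Lemma 3.2 to guarantee that every such matrix $M$ actually arises from some basis.

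I do not expect a genuine obstacle here; the only mildly delicate point is bookkeeping — identifying the abstract pullback of Lemma 4.3 with the concrete inverse image in a way that respects the two projections, so that it is legitimately $\mathbf{M}$ (rather than some unnamed comparison map) that realizes the bijection. Accordingly, the version I would actually write down bypasses categorical language: let $\mathbf{M}$ denote the corestriction $S\to GL_n(K)$ above, observe that it is injective, apply Proposition 3.1 to identify its image with $\mathbf{F}^{-1}\big(Eth(f_1,f_2,\dots,f_n)\big)$, use Lemma 3.2 for surjectivity onto that set, and conclude.
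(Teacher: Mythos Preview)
Your argument is correct and follows exactly the route the paper intends: the corollary is stated immediately after the pullback lemma (Lemma~4.2) with no separate proof, so ``pullback along an inclusion is the inverse image'' is precisely what is being invoked. Two small cleanups: the pullback square is Lemma~4.2, not~4.3; and in your concrete version the appeal to Lemma~3.2 is superfluous, since any $N\in GL_n(K)$ is automatically the transition matrix of the basis $e_i=\sum_k \sigma_{ik}f_k$, after which Proposition~3.1 alone tells you that this basis is semi-natural exactly when $\mathbf{F}(N)\in Eth(f_1,\dots,f_n)$.
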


Consider the mapping $$\mathbf{G}:S\rightarrow W$$ that sends a semi-natural basis $e_1, e_2,...,e_n$ to the weight homomorphism $w$ defined by $w(e_i)=1$ ($i=1,2,...,n)$; $\omega$ is indeed an algebra homomorphism, as it is easy to notice.

\begin{lem}
Let $e_1, e_2,...,e_n$ and $e'_1, e'_2,...,e'_n$  be semi-natural bases of an algebra $A$, and let the transform matrix from $e_1, e_2,...,e_n$ to $e'_1, e'_2,...,e'_n$ be $N$. Then the following conditions are equivalent:
\vskip+1mm
(i) $\mathbf{G}(e_1, e_2,...,e_n)=\mathbf{G}(e'_1, e'_2,...,e'_n)$;
\vskip+1mm
(ii) there is a weight homomorphism $w$ such that the matrix of $w$ in both bases is $(1,1,...,1)$;
\vskip+1mm
(iii) the following  equality holds:
\begin{equation}
N(1,1,...,1)^T=(1,1,...,1)^{T}.
\end{equation}
\vskip+1mm
(iv) $N$ is row stochastic.
\vskip+1mm
\end{lem}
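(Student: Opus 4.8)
The plan is to establish (iii)$\Leftrightarrow$(iv) directly and then close the loop (i)$\Rightarrow$(ii)$\Rightarrow$(iii)$\Rightarrow$(i); everything rests on one elementary fact, namely the way the coordinates of a linear functional transform under the change of basis encoded by $N$.

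Write $N=(\nu_{ik})_{1\le i,k\le n}$, so that $e'_i=\sum_{k=1}^{n}\nu_{ik}e_k$ in accordance with (3.1). The $i$-th entry of the column $N(1,1,\dots,1)^{T}$ is $\sum_{k=1}^{n}\nu_{ik}$, the $i$-th row sum of $N$; hence condition (iii) says precisely that every row sum of $N$ equals $1$, which is the definition of row stochasticity. This settles (iii)$\Leftrightarrow$(iv). The key observation for the remaining implications is that for an arbitrary linear functional $w$ on $A$ one has $w(e'_i)=\sum_{k=1}^{n}\nu_{ik}\,w(e_k)$, so that the column $(w(e'_1),\dots,w(e'_n))^{T}$ equals $N\,(w(e_1),\dots,w(e_n))^{T}$; I would record this as the one display the proof actually needs.

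For (i)$\Rightarrow$(ii): by the definition of $\mathbf{G}$, the weight homomorphism $w:=\mathbf{G}(e_1,\dots,e_n)$ satisfies $w(e_i)=1$ for all $i$, so its matrix in $e_1,\dots,e_n$ is $(1,\dots,1)$, and likewise the matrix of $\mathbf{G}(e'_1,\dots,e'_n)$ in $e'_1,\dots,e'_n$ is $(1,\dots,1)$; assumption (i) identifies these two weight homomorphisms, and that common homomorphism witnesses (ii). For (ii)$\Rightarrow$(iii): apply the key observation to the homomorphism $w$ furnished by (ii); its $e$-coordinate column is $(1,\dots,1)^{T}$, hence its $e'$-coordinate column is $N(1,\dots,1)^{T}$, and the latter is again $(1,\dots,1)^{T}$ by (ii) — exactly condition (iii). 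For (iii)$\Rightarrow$(i): since $e_1,\dots,e_n$ is semi-natural, $w:=\mathbf{G}(e_1,\dots,e_n)$ is a weight homomorphism with $w(e_i)=1$ for every $i$, and then the key observation together with (iii) gives $w(e'_i)=1$ for every $i$, so $w=\mathbf{G}(e'_1,\dots,e'_n)$ as well, which is (i).

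There is no real obstacle: the whole argument is linearity plus careful bookkeeping. The single point that must be handled with care is keeping straight which matrix — $N$, its transpose, or its inverse — implements each coordinate change; with the paper's convention (3.1) for the transition matrix, it is $N$ itself (not $N^{-1}$) that carries the $e$-coordinate column of a functional to its $e'$-coordinate column, because the coordinates of a functional transform by the same matrix as the basis vectors.
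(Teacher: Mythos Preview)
Your proof is correct. The paper states this lemma without proof, treating all four equivalences as immediate from the definition of $\mathbf{G}$ and the change-of-basis formula; your argument supplies exactly those details, and your care about which matrix ($N$ rather than $N^{-1}$ or $N^{T}$) carries the functional's coordinate column is the only thing one could get wrong here.
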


\vskip+3mm
Recall that, according to Proposition 2.1, for any basis $f_1,f_2,...,f_n$ of $A$, there is a bijection $$\mathbf{H}:W\rightarrow Eth(f_1,f_2,...,f_n).$$
\begin{lem}
For any basis $f_1,f_2,...,f_n$ of $A$, one has the equality $$\mathbf{F}'=\mathbf{H}\mathbf{G}.$$
\end{lem}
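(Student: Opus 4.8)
The plan is to unwind all three maps on a single semi-natural basis $e_1,e_2,\dots,e_n$ and check that the resulting elements of $Eth(f_1,f_2,\dots,f_n)$ coincide. Let $M=(\sigma_{ik})$ be the transition matrix from $e_1,e_2,\dots,e_n$ to $f_1,f_2,\dots,f_n$, so that $\mathbf{M}(e_1,\dots,e_n)=M$. By construction of the pullback square in Lemma 4.3, $\mathbf{F}'(e_1,\dots,e_n)$ is the unique element of $Eth(f_1,\dots,f_n)$ whose image under the embedding $\mathbf{I}$ equals $\mathbf{F}(M)$; in other words, $\mathbf{F}'(e_1,\dots,e_n)=(\alpha_1,\alpha_2,\dots,\alpha_n)$ with $\alpha_i=\sum_{k=1}^{n}\sigma_{ik}$. (That this $n$-tuple really does lie in $Eth(f_1,\dots,f_n)$ is exactly the ``only if'' direction of Proposition 3.1, applied to the semi-natural basis $e_1,\dots,e_n$.)

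Next I would compute $\mathbf{H}\mathbf{G}(e_1,\dots,e_n)$. First, $\mathbf{G}(e_1,\dots,e_n)$ is the weight homomorphism $w$ determined by $w(e_i)=1$ for all $i$. Then $\mathbf{H}$, which by Proposition 2.1 sends a weight homomorphism $w$ to the $n$-tuple $(w(f_1),w(f_2),\dots,w(f_n))$, gives
$$
\mathbf{H}\mathbf{G}(e_1,\dots,e_n)=(w(f_1),w(f_2),\dots,w(f_n)).
$$
Using $f_i=\sum_{k=1}^{n}\sigma_{ik}e_k$ and the fact that $w$ is linear with $w(e_k)=1$, we get $w(f_i)=\sum_{k=1}^{n}\sigma_{ik}w(e_k)=\sum_{k=1}^{n}\sigma_{ik}=\alpha_i$. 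Hence $\mathbf{H}\mathbf{G}(e_1,\dots,e_n)=(\alpha_1,\alpha_2,\dots,\alpha_n)$, which is precisely $\mathbf{F}'(e_1,\dots,e_n)$ as computed above. Since the semi-natural basis was arbitrary, this proves $\mathbf{F}'=\mathbf{H}\mathbf{G}$.

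The argument is essentially a diagram chase plus a one-line linear-algebra computation, so there is no serious obstacle; the only point requiring care is bookkeeping the two a priori different descriptions of $\mathbf{F}'$: the abstract one coming from the universal property of the pullback in Lemma 4.3, versus the concrete formula $\alpha_i=\sum_k \sigma_{ik}$. I would make explicit that these agree because $\mathbf{F}\circ\mathbf{M}=\mathbf{I}\circ\mathbf{F}'$ forces the value of $\mathbf{F}'$ on each basis to be the unique preimage under $\mathbf{I}$ of $\mathbf{F}(\mathbf{M}(-))$, and $\mathbf{I}$ is just an inclusion, so ``unique preimage'' means literal equality of tuples. One should also remember to invoke Proposition 3.1 to know that $(\alpha_1,\dots,\alpha_n)$ indeed belongs to $Eth(f_1,\dots,f_n)$, so that the equation $\mathbf{F}\mathbf{M}=\mathbf{I}\mathbf{F}'$ is even well-typed. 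After that, matching the two computations finishes the proof.
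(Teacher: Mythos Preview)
Your proof is correct and follows the same approach as the paper's: evaluate both $\mathbf{F}'$ and $\mathbf{H}\mathbf{G}$ on an arbitrary semi-natural basis and verify that each yields the tuple $(\alpha_1,\dots,\alpha_n)$ with $\alpha_i=\sum_k\sigma_{ik}$. The paper compresses the whole argument into the single observation that $\mathbf{F}'(e_1,\dots,e_n)=(\mathbf{G}(e_1,\dots,e_n)(f_1),\dots,\mathbf{G}(e_1,\dots,e_n)(f_n))$, whereas you spell out the linear-algebra computation $w(f_i)=\sum_k\sigma_{ik}w(e_k)=\alpha_i$ and the pullback bookkeeping explicitly; the content is identical.
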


\begin{proof}
It suffices to note that $$\mathbf{F}'(e_1, e_2,...,e_n)=$$ $$(\mathbf{G}(e_1, e_2,...,e_n)(f_1), \mathbf{G}(e_1, e_2,...,e_n)(f_2),...,\mathbf{G}(e_1, e_2,...,e_n)(f_n)).$$
 \end{proof} 
 
 \begin{lem}
 The mapping $\mathbf{G}$ is surjective.
 \end{lem}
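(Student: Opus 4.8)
The claim to prove is that the mapping $\mathbf{G}:S\to W$ is surjective, where $S$ is the set of semi-natural bases and $W$ the set of weight homomorphisms, and $\mathbf{G}$ sends a semi-natural basis $e_1,\dots,e_n$ to the weight homomorphism taking value $1$ on every $e_i$.

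The plan is to fix an arbitrary weight homomorphism $w\in W$ and produce a semi-natural basis mapped to $w$ by $\mathbf{G}$. First I would fix any basis $f_1,f_2,\dots,f_n$ of $A$ and, via the bijection $\mathbf H:W\to Eth(f_1,\dots,f_n)$ from Proposition 2.1, pass to the non-trivial solution $(\alpha_1,\alpha_2,\dots,\alpha_n)=\mathbf H(w)$ of the Etherington's system with respect to $f_1,\dots,f_n$; here $\alpha_i=w(f_i)$ and not all $\alpha_i$ vanish. Next, by Lemma 3.2 applied to this tuple, there is a non-singular matrix $M=(\sigma_{ik})$ with $\sum_{k=1}^n\sigma_{ik}=\alpha_i$ for every $i$. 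Let $e_1,\dots,e_n$ be the basis defined by $e_i=\sum_{k=1}^n\sigma_{ik}f_k$, i.e.\ the basis whose transition matrix to $f_1,\dots,f_n$ is $M$. By Proposition 3.1 (the "if" direction), since $\mathbf F(M)=(\alpha_1,\dots,\alpha_n)\in Eth(f_1,\dots,f_n)$, the basis $e_1,\dots,e_n$ is semi-natural, so $e_1,\dots,e_n\in S$.

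It then remains to check that $\mathbf G(e_1,\dots,e_n)=w$. By definition $\mathbf G(e_1,\dots,e_n)$ is the weight homomorphism $w'$ with $w'(e_i)=1$ for all $i$. Using $e_i=\sum_k\sigma_{ik}f_k$ and $w(f_k)=\alpha_k$ together with $\sum_k\sigma_{ik}\alpha_k=\ ?$ — here one must be slightly careful: what Lemma 3.2 gives is $\sum_k\sigma_{ik}=\alpha_i$, which is exactly the condition $\mathbf F(M)=(\alpha_1,\dots,\alpha_n)$, and in the "if" part of the proof of Proposition 3.1 it is shown that $\sum_{k}\varrho_{ik}\alpha_k=1$ where $(\varrho_{ik})=M^{-1}$. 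The cleanest route is therefore to invoke Lemma 4.5: applying it to the pair of semi-natural bases $e_1,\dots,e_n$ and $f_1,\dots,f_n$ is not quite legitimate unless $f_1,\dots,f_n$ is itself semi-natural, so instead I would use Lemma 4.7 directly. Indeed Lemma 4.7 states $\mathbf F'=\mathbf H\mathbf G$, and by Lemma 4.4 (the pullback square) $\mathbf F'(e_1,\dots,e_n)=\mathbf F(\mathbf M(e_1,\dots,e_n))=\mathbf F(M)=(\alpha_1,\dots,\alpha_n)=\mathbf H(w)$. Hence $\mathbf H(\mathbf G(e_1,\dots,e_n))=\mathbf H(w)$, and since $\mathbf H$ is a bijection, $\mathbf G(e_1,\dots,e_n)=w$.

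The main obstacle, such as it is, is purely bookkeeping: assembling the correct diagram of maps ($\mathbf H$, $\mathbf G$, $\mathbf F'$, $\mathbf M$, $\mathbf F$) and citing Lemmas 4.4 and 4.7 in the right order, rather than re-deriving the coefficient identities by hand. Alternatively, one can give the entirely self-contained one-line argument: surjectivity of $\mathbf F'$ is immediate from surjectivity of $\mathbf F$ (Lemma 4.2) combined with Corollary 4.6 — more precisely, $\mathbf F'$ in the pullback square (3.2) is the pullback of the surjection $\mathbf F$ along $\mathbf I$, hence surjective, so by $\mathbf F'=\mathbf H\mathbf G$ and bijectivity of $\mathbf H$ the map $\mathbf G$ is surjective. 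I would present this last version as the proof, since it is the shortest and uses only results already in place.
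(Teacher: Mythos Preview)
Your final argument—$\mathbf{F}'$ is surjective because it is the pullback of the surjection $\mathbf{F}$ along $\mathbf{I}$, and then $\mathbf{F}'=\mathbf{H}\mathbf{G}$ together with the bijectivity of $\mathbf{H}$ forces $\mathbf{G}$ to be surjective—is precisely the paper's proof, which just cites Lemmas~4.1, 4.2, and 4.5. Your lemma numbers are systematically off (what you call 4.2, 4.4, 4.5, 4.7 are the paper's 4.1, 4.2, 4.4, 4.5, and your ``Corollary~4.6'' is the very statement under proof), and in your first concrete attempt the transition-matrix direction is reversed relative to the paper's convention~(3.1); but since you abandon that route for the clean pullback argument, none of this affects correctness.
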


\begin{proof}
The claim immediately follows from Lemmas 4.1, 4.2 and 4.5. It follows also from the proof of Lemma 1.11($(1)\Rightarrow (2)$) of \cite{W}. 
\end{proof}

Lemmas 4.4 and 4.6 imply the following statement.
\begin{prop}
The set $W$ of weight homomorphisms of $A$ is bijective to the quotient of the set $S$ by the equivalence relation induced by the left coset division of the general linear group $GL_n(K)$ by the subgroup $RS_n(K)$ of row stochastic matrices. 
\end{prop}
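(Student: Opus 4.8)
The plan is to assemble the bijection from the maps already constructed, using $\mathbf{G}:S\to W$ as the main tool. The key observation is that Lemma 4.7 tells us $\mathbf{G}$ is surjective, so $W$ is in bijection with the quotient of $S$ by the fibre-equivalence relation $\sim$ defined by $e\sim e' \iff \mathbf{G}(e)=\mathbf{G}(e')$. Thus everything reduces to identifying this relation $\sim$ with the one induced by the left coset partition $GL_n(K)/RS_n(K)$ under the transition-matrix correspondence.

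First I would fix a reference basis $f_1,\dots,f_n\in S$ (by Schafer's Theorem 2.4 such a semi-natural basis exists since $A$ is baric) and use the map $\mathbf{M}:S\to GL_n(K)$ from Lemma 4.3. By Lemma 3.2 (and Proposition 3.1), $\mathbf{M}$ identifies $S$ with $\mathbf{F}^{-1}(Eth(f_1,\dots,f_n))$, a subset of $GL_n(K)$; I would actually argue it is precisely the left-coset-union that it needs to be, but the cleaner route is to work directly with the equivalence relation. Given semi-natural bases $e,e'$ with $\mathbf{M}(e)=M$, $\mathbf{M}(e')=M'$, the transition matrix $N$ from $e$ to $e'$ satisfies $M' = N M$ (composition of change-of-basis matrices; one should double-check the order against the paper's convention in (3.1), but it is forced up to transpose/inverse and the argument is symmetric). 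Then Lemma 4.4, specifically the equivalence (i)$\iff$(iv), says $\mathbf{G}(e)=\mathbf{G}(e')$ if and only if $N$ is row stochastic, i.e.\ $N\in RS_n(K)$, which by Lemma 3.3 makes sense as a subgroup of $GL_n(K)$. Hence $e\sim e'$ iff $M'M^{-1}\in RS_n(K)$ (or $M^{-1}M'$, depending on convention), i.e.\ iff $M$ and $M'$ lie in the same left coset of $RS_n(K)$ in $GL_n(K)$.

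Putting the pieces together: $\mathbf{G}$ descends to a well-defined injection $S/\!\sim\,\hookrightarrow W$ by construction of $\sim$ as its fibre relation, it is surjective by Lemma 4.7, and the previous paragraph identifies $S/\!\sim$ with the set of those left cosets of $RS_n(K)$ that meet $\mathbf{M}(S)$. It remains to note that $\mathbf{M}(S)$ is a union of such cosets — equivalently that $S$ itself carries a free transitive action issue is moot because we only need the induced partition to match — so the quotient $S/\!\sim$ is exactly the relevant subset of $GL_n(K)/RS_n(K)$ described in the statement. Composing the bijections gives $W\cong S/\!\sim$, which is the claim.

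The main obstacle I anticipate is purely bookkeeping: pinning down the exact order and variance (transpose vs.\ inverse) in the relation $M'=NM$ so that "row stochastic $N$" corresponds to a \emph{left} coset rather than a right one, and making sure the "induced by the left coset division" phrasing in the statement matches what the maps actually produce. This is entirely mechanical given equations (3.1)--(3.7) and Lemma 4.4, but it is the one place a sign/side error would creep in, so I would write that identification out carefully and leave the rest to the cited lemmas.
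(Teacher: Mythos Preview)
Your approach is essentially the same as the paper's: use the surjectivity of $\mathbf{G}$ (this is Lemma 4.6, not 4.7) together with Lemma 4.4(i)$\Leftrightarrow$(iv) to identify the fibres of $\mathbf{G}$ with the coset relation via the transition-matrix factorisation. The paper resolves your left/right bookkeeping worry in one line by writing $M=M'N$ (with $M,M'$ the transitions from $e,e'$ to the fixed $f$ and $N$ the transition from $e$ to $e'$), which immediately gives $(M')^{-1}M=N$ and hence the left-coset condition.
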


\begin{proof}
Let $f_1,f_2,...,f_n$ be a basis of $A$, while  $e_1, e_2,...,e_n$ and $e'_1, e'_2,...,e'_n$  be semi-natural bases of $A$. Let the transition matrix from $e_1, e_2,...,e_n$ to $e'_1, e'_2,...,e'_n$ be $N$, while the transition matrices from $e_1, e_2,...,e_n$ and $e'_1, e'_2,...,e'_n$ to $f_1,f_2,...,f_n$ be $M$ and $M'$ respectively. Now the claim follows from the obvious equality $M=M'N$.
\end{proof}

\begin{cor}
Let $f_1,f_2,...,f_m$ be a basis of an algebra $A$. The conditions (i) and (ii) below are equivalent and are implied by the equivalent conditions (iii), (iv). If $f_1,f_2,...,f_m$ is semi-natural, then all four conditions are equivalent:

(i) $A$ has a unique weight homomorphism;

(ii) for any semi-natural bases, their transition matrices to the basis $f_1,f_2,...,f_n$ lie in the same left coset with respect to the subgroup $RS_n(K)$ of $GL_n(K)$;

(iii) the image of $S$ under the mapping $\mathbf{M}$ is $RS_n(K)$;

(iv) one has $$\mathbf{F}^{-1}(Eth(f_1,f_2,...,f_n))=RS_n(K).$$

\begin{proof}
The equivalence (i)$\Leftrightarrow$(ii) immediately follows from Proposition 4.7. The equivalence (iii)$\Leftrightarrow$(iv) follows from Lemma 4.2. The implication (iii)$\Rightarrow$(ii) is obvious. If $f_1,f_2,...,f_m$ is semi-natural, then the equivalence (i)$\Leftrightarrow$(iv) follows from Lemma 2.5 and Theorem 2.6.
\end{proof}

\end{cor}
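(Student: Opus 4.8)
The plan is to read the corollary off the structural results of Sections~2--4, keeping careful track of which steps need the basis $f_1,f_2,...,f_m$ to be semi-natural and which do not; throughout I write $n=\dim A$ (so the $m$ in the statement is this $n$). For the equivalence (i)$\Leftrightarrow$(ii) I would simply invoke Proposition~4.7: it identifies $W$ with the quotient of the set $S$ of semi-natural bases by the relation ``the transition matrices to $f_1,f_2,...,f_n$ lie in a common left coset of $RS_n(K)$ in $GL_n(K)$'', so $W$ is a one-element set exactly when all of those transition matrices lie in a single left coset, which is precisely (ii). (Here $S$ is non-empty precisely when $A$ is baric, by Theorem~2.4, and the fact that the coset structure involved does not depend on the chosen basis $f_1,f_2,...,f_n$ is the identity $M=M'N$ among transition matrices recorded in the proof of Proposition~4.7.)

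For (iii)$\Leftrightarrow$(iv) the point is that Lemma~4.2 presents $S$, through $\mathbf{M}$, as a pullback of $\mathbf{F}$ along the inclusion $\mathbf{I}$; since $\mathbf{I}$ is injective, $\mathbf{M}$ is injective and its image inside $GL_n(K)$ is exactly $\mathbf{F}^{-1}\!\big(Eth(f_1,f_2,...,f_n)\big)$ --- this is Corollary~4.3, now read as an equality of subsets of $GL_n(K)$ rather than merely as a bijection. Hence the two assertions $\mathbf{M}(S)=RS_n(K)$ and $\mathbf{F}^{-1}\!\big(Eth(f_1,f_2,...,f_n)\big)=RS_n(K)$ say literally the same thing. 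For (iii)$\Rightarrow$(ii) I would note that by Lemma~3.3 the set $RS_n(K)$ is a subgroup of $GL_n(K)$, hence is itself one left coset of $RS_n(K)$; so if $\mathbf{M}(S)=RS_n(K)$, then every transition matrix from a semi-natural basis to $f_1,f_2,...,f_n$ lies in that single coset, giving (ii). Together with (iii)$\Leftrightarrow$(iv) this establishes ``(iii) or (iv)'' $\Rightarrow$ ``(i) and (ii)''.

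It then remains, under the extra hypothesis that $f_1,f_2,...,f_n$ is semi-natural, to close the circle, and I would do this by proving (i)$\Rightarrow$(iv). By Lemma~2.5 the semi-naturality of $f_1,f_2,...,f_n$ gives $(1,1,...,1)\in Eth(f_1,f_2,...,f_n)$, while (i), via Theorem~2.6 (together with Corollary~2.3, so that being a singleton does not depend on the basis), forces $Eth(f_1,f_2,...,f_n)$ to be a one-element set; hence $Eth(f_1,f_2,...,f_n)=\{(1,1,...,1)\}$, and applying $\mathbf{F}^{-1}$ and using the identity $\mathbf{F}^{-1}\!\big(\{(1,1,...,1)\}\big)=RS_n(K)$ (immediate from the definitions of $\mathbf{F}$ and of a row stochastic matrix) yields (iv). Chaining the implications already in hand, (i)$\Rightarrow$(iv)$\Leftrightarrow$(iii)$\Rightarrow$(ii)$\Rightarrow$(i), now makes (i)--(iv) all equivalent.

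I do not expect a genuine obstacle: the corollary is essentially an assembly of Proposition~4.7, Lemmas~2.5 and 3.3, Lemma~4.2, Corollary~4.3 and Theorem~2.6. The one point that needs care is the bookkeeping --- treating $\mathbf{M}(S)$ and $\mathbf{F}^{-1}\!\big(Eth(f_1,f_2,...,f_n)\big)$ as the \emph{same} subset of $GL_n(K)$ rather than as merely abstractly bijective sets, and noticing that exactly one of the four equivalences, namely (i)$\Leftrightarrow$(iv), uses the semi-naturality of $f_1,f_2,...,f_n$, whereas (i)$\Leftrightarrow$(ii), (iii)$\Leftrightarrow$(iv) and (iii)$\Rightarrow$(ii) do not.
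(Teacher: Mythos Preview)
Your proposal is correct and follows essentially the same route as the paper: (i)$\Leftrightarrow$(ii) via Proposition~4.7, (iii)$\Leftrightarrow$(iv) via the pullback description of Lemma~4.2/Corollary~4.3, (iii)$\Rightarrow$(ii) trivially, and the closing step (i)$\Rightarrow$(iv) in the semi-natural case via Lemma~2.5 and Theorem~2.6. Your write-up is more explicit in a couple of places --- you cite Lemma~3.3 to justify that $RS_n(K)$ is itself a left coset, and you spell out $\mathbf{F}^{-1}(\{(1,\ldots,1)\})=RS_n(K)$ --- but these are expansions of what the paper leaves tacit, not departures from its argument.
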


\vskip+3mm

\textit{Author's address:}

\textit{Dali Zangurashvili, A. Razmadze Mathematical Institute of Tbilisi State University,}
\textit{2 M. Aleksidze Str., Lane II, Tbilisi 0193, Georgia}

\textit{e-mail:} dali.zangurashvili@tsu.ge

\end{document}